\newtheorem{thm}{Theorem}[section]
\newtheorem{lem}[thm]{Lemma}
\newtheorem{prop}[thm]{Proposition}
\theoremstyle{remark}
\newtheorem{rem}[thm]{Remark}
\newtheorem*{rem*}{Remark}
\theoremstyle{definition}
\newtheorem{dfn}[thm]{Definition}
\newtheorem{ex}[thm]{Example}
\numberwithin{equation}{section}
\newcommand{\om}{\Omega}
\newcommand{\ch}{{\mathcal{O}_c}}
\newcommand{\oa}{{\mathcal{O}_c^\mathrm{a}}}
\newcommand{\C}{\mathbb{C}}
\newcommand{\Ker}{\operatorname{Ker}}
\begin{document}
\afterpage{\cfoot{\thepage}}
\clearpage

\title{On the Nullstellensatz for c-holomorphic functions with algebraic graphs}

\author{Adam Bia\l o\.zyt, Maciej P. Denkowski, Piotr Tworzewski}\address{Jagiellonian University, Faculty of Mathematics and Computer Science, Institute of Mathematics, \L ojasiewicza 6, 30-348 Krak\'ow, Poland}\email{adam.bialozyt@doctoral.uj.edu.pl} \email{maciej.denkowski@uj.edu.pl} \email{piotr.tworzewski@uj.edu.pl}\date{January 28th 2020}
\keywords{Complex analytic and algebraic sets, c-holo\-morph\-ic
functions, Nullstellensatz}
\subjclass{32B15, 32A17, 32A22}

\begin{abstract} 
C-holomorphic functions defined on algebraic sets and having algebraic graphs can be considered as a complex counterpart of \textit{regulous} functions introduced recently in real geometry. This note is a part of our study on the subject; we prove herein some effective Nullstellens\"atze.
\end{abstract}

\maketitle

\section{Introduction}

This article is a second one, after \cite{Dpreprint}, of a series devoted to continuous functions defined on a given algebraic subset of a complex finite-dimensional vector space $M$ and having algebraic graphs, i.e. the class of c-holomorphic functions with algebraic graphs. They can be seen as a kind of complex counterpart of the recently introduced {\it regulous} functions \cite{FHMM}, \cite{Kol} in connection with \cite{Krz}. 

To simplify the notation, we will consider $M={\C}^m$ (anyway, everything here is invariant under linear isomorphisms). In this paper we study some basic Nullstellens\"atze in this class. Dealing with such functions requires the use of purely geometric methods.

For the convenience of the reader we recall some basic facts. Let $A\subset\om$ be an analytic subset of an open set $\om\subset{\C}^m$. R. Remmert generalized the notion of holomorphic mapping onto
sets having singularities in a more convenient way (from the geometric point of view) than the usual notion of {\it weakly holomorphic functions} (i.e. functions defined and holomorphic on $\mathrm{Reg} A$ and locally bounded on $A$), namely:

\begin{dfn} (cf. \cite{R}) A mapping $f\colon
A\to{\C}^n$ is called {\it c-ho\-lo\-morph\-ic} if it is
continuous and the restriction of $f$ to the subset of regular
points $\mathrm{Reg} A$ is holomorphic. We denote by $\ch (A,{\C}^n)$
the vector space of c-holomorphic mappings, and by $\ch (A)$ the ring of
c-holomorphic functions. 
\end{dfn}

The corner stone of all our further considerations is the following theorem:

\begin{thm}[\cite{Wh} 4.5Q] A mapping $f\colon A\to {\C}^n$ defined on an analytic set $A\subset{\C}^m$ 
is c-holomorphic iff it is continuous and its graph
$\Gamma_f:=\{(x,f(x))\mid x\in A\}$ is an analytic subset of
$\om\times{\C}^n$.\end{thm}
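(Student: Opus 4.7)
The plan is to prove the two implications separately. For the forward direction, I would route the argument through the normalization of $A$ so that the removable-singularity step takes place on a normal complex space, and then invoke Remmert's proper mapping theorem. For the reverse direction, I would argue locally at each regular point of $A$ and show that the projection from $\Gamma_f$ onto $A$ is a biholomorphism there.

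For $(\Rightarrow)$, let $\nu\colon\tilde A\to A$ be the normalization of $A$; then $\tilde A$ is a normal complex space, $\nu$ is proper, finite and surjective, and $\nu$ restricts to a biholomorphism over $\mathrm{Reg}\, A$. The pullback $\tilde f:=f\circ\nu$ is continuous on $\tilde A$ and holomorphic on the open dense subset $\nu^{-1}(\mathrm{Reg}\, A)$, whose complement $\nu^{-1}(\mathrm{Sing}\, A)$ is a nowhere-dense analytic subset of the normal space $\tilde A$. Local boundedness of $\tilde f$ (furnished by continuity) together with the Riemann-type removable singularity theorem on normal complex spaces yields that $\tilde f$ is holomorphic on all of $\tilde A$, so $\Gamma_{\tilde f}\subset\tilde A\times\mathbb{C}^n$ is analytic. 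The holomorphic map $(\nu,\mathrm{id})\colon\tilde A\times\mathbb{C}^n\to\Omega\times\mathbb{C}^n$ is proper (since $\nu$ is) and carries $\Gamma_{\tilde f}$ onto $\Gamma_f$, and Remmert's proper mapping theorem then gives that $\Gamma_f$ is analytic in $\Omega\times\mathbb{C}^n$.

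For $(\Leftarrow)$, suppose $f$ is continuous and $\Gamma_f$ is analytic; I want to show that $f$ is holomorphic at each $a\in\mathrm{Reg}\, A$. Set $k:=\dim_a A$. Near $(a,f(a))$ the projection $\pi\colon\Gamma_f\to A$ is a homeomorphism with continuous inverse $x\mapsto(x,f(x))$, so $\Gamma_f$ is a topological $2k$-manifold at $(a,f(a))$; this forces the complex dimension of $\Gamma_f$ at that point to be exactly $k$. Next I would check that the germ $(\Gamma_f,(a,f(a)))$ is irreducible: each irreducible component has complex dimension $k$, and since $\pi$ is injective its restriction to such a component has zero-dimensional fibres, so by the dimension formula its image is a $k$-dimensional subgerm of the smooth, hence irreducible, $(A,a)$, i.e.\ all of $(A,a)$; injectivity of $\pi$ then rules out two distinct components. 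Thus $\pi$ is a bijective holomorphic map from an irreducible $k$-dimensional complex space onto the $k$-dimensional complex manifold $A$, and by the classical analytic version of Zariski's main theorem such a map is a biholomorphism. Therefore $f=\mathrm{pr}_2\circ\pi^{-1}$ is holomorphic at $a$.

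The main obstacle is the forward direction. The naive approach of taking the closure in $\Omega\times\mathbb{C}^n$ of the analytic graph $\Gamma_{f|_{\mathrm{Reg}\, A}}$ and invoking Remmert-Stein breaks down, because the exceptional set $\mathrm{Sing}\, A\times\mathbb{C}^n$ has complex dimension $\dim\mathrm{Sing}\, A+n$, which in general exceeds $\dim A$ and so violates the hypotheses of Remmert-Stein. One could fall back on the Bishop-Stolzenberg extension theorem, but that requires a nontrivial $2k$-Hausdorff mass estimate for the graph across the singularities of $A$. The normalization route outlined above sidesteps both difficulties by transferring the singular-extension step onto a normal complex space, where the classical Riemann second theorem applies directly.
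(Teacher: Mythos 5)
Your argument is essentially correct, but note that the paper does not prove this statement at all: it is quoted from Whitney (4.5Q), so the comparison below is with the classical proof rather than with anything in the text. For the forward direction Whitney's route is more elementary and local: one represents $A$ near a point as a $\lambda$-sheeted branched cover $\pi\colon A\cap U\to U'\subset\mathbb{C}^k$ with critical locus $\sigma$, forms for each component of $f$ the polynomial $P(u,t)=\prod_{x\in\pi^{-1}(u)}(t-f(x))$ whose coefficients are holomorphic and locally bounded on $U'\setminus\sigma$ (exactly the characteristic-polynomial device the present paper uses in Section 2), extends them across $\sigma$ by the ordinary Riemann extension theorem in $\mathbb{C}^k$, and then identifies $\Gamma_f$ as the closure of suitable irreducible components of $\{P(\pi(x),t)=0\}$ over $U'\setminus\sigma$. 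Your normalization argument replaces this with the first Riemann extension theorem on the normal space $\tilde A$ followed by Remmert's proper mapping theorem; this is valid (properness of $\nu\times\mathrm{id}$ and surjectivity of $\nu$ do give $(\nu\times\mathrm{id})(\Gamma_{\tilde f})=\Gamma_f$), and you correctly diagnose why the naive Remmert--Stein closure argument fails, but the price is the full machinery of normalization and images of complex spaces where bounded symmetric functions in $\mathbb{C}^k$ suffice. Your reverse direction is the standard one and is fine.

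One small imprecision in the reverse direction: it is not automatic that \emph{every} irreducible component of the germ $\bigl(\Gamma_f,(a,f(a))\bigr)$ has dimension $k$; a priori there could be components of smaller dimension, for which your ``image is a $k$-dimensional subgerm'' step fails. The clean fix is to note that the images of the components are analytic subgerms covering the irreducible germ $(A,a)$, so some component $W$ satisfies $\pi(W)=(A,a)$; since the fibre of $\pi|_{\Gamma_f}$ over each $x$ is the single point $(x,f(x))$, this forces $W$ to contain, hence equal, the whole germ of $\Gamma_f$, and irreducibility (and purity of dimension $k$) follows at once. After that, your appeal to the bijectivity-plus-normal-target criterion (or, more simply, degree-one branched covering plus Riemann extension on $\mathbb{C}^k$) closes the argument.
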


For a more detailed list of basic properties of c-holomorphic mappings see \cite{Wh}, \cite{D} and \cite{Dpreprint}. It is easy to see that for $f\in{\ch}(A,{\C}^n)$, the graph $\Gamma_f$ is irreducible iff $A$ is irreducible.

Finally, we recall some notions we will be using. 
If $\Gamma\subset{\C}^n$ is algebraic of pure dimension $k$, then $\deg \Gamma=\#(L\cap \Gamma)$ for any $L\subset{\C}^n$ affine subspace of dimension $n-k$ transversal to $\Gamma$ and such that $L_\infty\cap\overline{\Gamma}=\varnothing$, where $\overline{\Gamma}$
is the projective closure and $L_\infty$ denotes the points of $L$ at infinity (see \cite{L} VII.\S 7 and \cite{Ch}).
The condition  $L_\infty\cap\overline{\Gamma}=\varnothing$ is equivalent both to $L_\infty\cap{\Gamma}_\infty=\varnothing$) and to the inclusion
$$
\Gamma\subset\{u+v\in L'+L\mid ||v||\leq\mathrm{const.} (1+||u||)\}
$$
where $L'$ is any $k$-dimensional affine subspace such that $L'+L={\C}^n$.
Moreover, for any $(n-k)$-dimensional affine subspace $L$ intersecting $A$ in a zero-dimensional set, $\#(L\cap\Gamma)\leq\deg \Gamma$. 

We write $G'_{n-k}({\C}^n)$ for the set of affine hyperplanes of dimension $n-k$ and we have
$$
\deg \Gamma=\max\{\#(L\cap \Gamma)\mid L\in G'_{n-k}({\C}^n)\colon \dim (L\cap \Gamma)=0\}.
$$ 
%



A natural subclass of c-holomorphic functions that we can expect will behave like polynomials or regular functions is formed by those c-holomorphic function whose graph is algebraic. 

\begin{dfn}
We will call {\it c-algebraic} any continuous function $f\colon A\to{\C}$ having an algebraic graph, where $A\subset{\C}^m$ is a fixed analytic set. 
We will denote by $$\mathcal{O}_c^\mathrm{a}(A)=\{f\in{\ch}(A)\mid \Gamma_f\> \textrm{is algebraic}\}$$ the ring of such functions and write $f=(f_1,\dots, f_n)\in{\oa}(A,{\C}^n)$ whenever all the components $f_j$ are c-algebraic which is clearly equivalent to $f$ having an algebraic graph.

\end{dfn}
\begin{rem} Of course, by the Chevalley-Remmert Theorem 
$$
{\oa}(A)\neq\varnothing\>\Rightarrow\> A\>\textrm{is algebraic}.
$$
\end{rem}
\begin{ex}
The eternal example of a c-algebraic but non-regular function is $f(x,y)=y/x$ on $A\colon y^2=x^3$ extended at the origin of ${\C}^2$ by putting $f(0,0)=0$. 
\end{ex}
As observed in \cite{Dpreprint} each c-algebraic function on an algebraic pure dimensional set is the restriction of a rational function (this is the link to \cite{FHMM}).

From now on we will assume that $A\subset{\C}^m$ is a pure $k$-dimensional algebraic set of degree $d:=\deg A$. Obviously, we shall assume also $k\geq 1$ unless something else is stated.

Let $||\cdot||$ denote one of the usual norms in ${\C}^m$. In \cite{Dpreprint} we studied the {\it growth exponent} at infinity that generalizes the notion of degree of a polynomial for $f\in\mathcal{O}_c^\mathrm{a} (A)$:
$$
\mathcal{B}(f):=\inf\{s\geq 0\mid |f(x)|\leq \mathrm{const.}||x||^s,\ x\in A\colon ||x||\gg 1\}
$$
where the defining inequality is equivalent to 
$$
|f(x)|\leq \mathrm{const.}(1+||x||)^s,\quad x\in A\leqno{(*)}
$$
and this characterises ${\oa}(A)$ in ${\ch}(A)$, cf. \cite{Dpreprint} Lemma 3.1. In the polynomial case, i.e. $A={\C}^k$, we have $\mathcal{B}(f)=\deg f$, for c-algebraic functions are polynomials by the Serre Graph Theorem.


\section{Characteristic polynomials}

 In the setting of c-algebraic functions we are obliged to make do more with the geometric structure than the algebraic one which is a basic hindrance when trying to transpose the results known, for instance, for polynomial dominating mappings to the c-holomorphic algebraic case.

We consider now the following situation:\\ Let $A\subset{\C}^m$ be algebraic of pure dimension $k>0$ and suppose $f\in{\oa}(A,{\C}^k)$ is a proper mapping. 

Since $\Gamma_f$ is algebraic with proper projection onto ${\C}^k$, the cardinality of the fibre $\# f^{-1}(y)$ is constant for the generic point $y\in{\C}^k$. Following \cite{Dpreprint} we denote this number by $\mathrm{d}(f)$ and call it the {\it geometric degree} of $f$ just as in the polynomial case. We call {\it critical} for $f$ any point $y\in{\C}^k$ for which $\#f^{-1}(w)\neq\mathrm{d}(f)$. In that case one has actually $\#f^{-1}(w)<\mathrm{d}(f)$.
Obviously $\mathrm{d}(f)\leq\deg\Gamma_f$ (cf. \cite{L}). 
 

In the sequel we shall use intensively the notion of characteristic polynomial relative to a proper mapping $f$.
For any $g\in\mathcal{O}_c^\mathrm{a}(A)$ let us introduce the {\it characteristic polynomial} of $g$ relative to $f$: for any $y\in{\C}^k$ not critical for $f$ we put
$$
P_g(y,t):=\prod_{x\in f^{-1}(y)}(t-g(x))=t^{\mathrm{d}(f)}+a_1(y)t^{\mathrm{d}(f)-1}+\ldots+a_{\mathrm{d}(f)}(y)\leqno{(\#)}
$$
extending the coefficients through the critical locus of $f$ thanks to the Riemann Extension Theorem (they are continuous; see below their form). Therefore $P_g\in\mathcal{O}({\C}^k)[t]$. 
\begin{prop}\label{wman}
In the introduced setting, $P_g$ is a pure-bred polynomial, i.e. $P_g\in{\C}[y_1,\dots,y_k][t]$.
\end{prop}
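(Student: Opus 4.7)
The coefficients $a_i(y)$ are already known to extend to entire functions on $\C^k$ by the Riemann Extension Theorem, as observed in the statement just above. Thus, by Liouville's theorem on $\C^k$, it suffices to show that each $a_i$ has polynomial growth at infinity. Expanding the defining product in $(\#)$, $a_i(y)$ is (up to sign) the $i$-th elementary symmetric function of the values $g(x)$ for $x\in f^{-1}(y)$, so for every non-critical $y$ one has
$$
|a_i(y)|\leq\binom{\mathrm{d}(f)}{i}\max_{x\in f^{-1}(y)}|g(x)|^{i}.
$$

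Now, by the characterization $(*)$ from the introduction, $|g(x)|\leq C_1(1+\|x\|)^{s}$ on $A$ for some $s\geq 0$ (any $s>\mathcal{B}(g)$ works). Thus everything reduces to controlling $\|x\|$ on the fibre $f^{-1}(y)$ in terms of $\|y\|$. This is the geometric core of the argument: $\Gamma_f\subset\C^m\times\C^k$ is algebraic of pure dimension $k$ and the second projection $\pi\colon\Gamma_f\to\C^k$ is proper. Properness ensures that whenever $(x_\nu,y_\nu)\in\Gamma_f$ escapes to infinity, $y_\nu$ must also escape, i.e.\ no sequence in $\Gamma_f$ can approach a projective direction lying in $L=\C^m\times\{0\}$. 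Hence $\overline{\Gamma_f}\cap L_\infty=\varnothing$, and the equivalence recalled in the introduction (with complement $L'=\{0\}\times\C^k$) gives a constant $C_2$ with
$$
\|x\|\leq C_2(1+\|y\|)\quad\text{for every }(x,y)\in\Gamma_f.
$$

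Substituting this linear bound into the symmetric-function estimate yields
$$
|a_i(y)|\leq C_3(1+\|y\|)^{s\,\mathrm{d}(f)}
$$
on the complement of the critical locus of $f$, and by continuity the estimate persists on all of $\C^k$. Each $a_i$ is therefore an entire function of polynomial growth on $\C^k$, hence a polynomial, which gives $P_g\in\C[y_1,\dots,y_k][t]$ as claimed.

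\textbf{Expected main obstacle.} Everything is routine once the linear bound $\|x\|\leq C_2(1+\|y\|)$ on $\Gamma_f$ has been established; the rest is a Liouville-type argument. The paper's introduction is arranged precisely to supply this bound: properness of $\pi$ translates into the condition $L_\infty\cap\overline{\Gamma_f}=\varnothing$, which is in turn equivalent to the affine-cone containment providing the desired growth control on fibres.
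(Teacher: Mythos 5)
Your overall strategy (elementary symmetric functions of $g$ on the fibres, a growth estimate on the coefficients, then Liouville) is exactly the paper's, but the step you yourself single out as the geometric core is wrong. Properness of the projection $\pi\colon\Gamma_f\to\C^k$ does \emph{not} imply $\overline{\Gamma_f}\cap L_\infty=\varnothing$ for $L=\C^m\times\{0\}$, and hence does not give the linear bound $\|x\|\leq C_2(1+\|y\|)$ on $\Gamma_f$. Properness only says that $y_\nu\to\infty$ whenever $(x_\nu,y_\nu)\in\Gamma_f$ escapes to infinity; it puts no constraint on the \emph{rate}, so the projective directions of $(x_\nu,y_\nu)$ may still degenerate into $L$. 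A counterexample inside the exact setting of the proposition: take $A=\{(x,y)\in\C^2\mid y^2=x^3\}$ (so $k=1$, $m=2$) and the proper c-algebraic function $f=y/x$ extended by $0$ at the origin, whose graph is $\Gamma_f=\{(t^2,t^3,t)\mid t\in\C\}$. The projection onto the last coordinate is proper (it is $t\mapsto t$ in this parametrization), yet $\|(t^2,t^3)\|\sim|t|^3$, so the linear bound fails and $[0:1:0]\in\overline{\Gamma_f}\cap L_\infty$. The condition $L_\infty\cap\overline{\Gamma}=\varnothing$ recalled in the introduction is strictly stronger than properness of the projection along $L$.

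What properness of the projection of an \emph{algebraic} set actually yields is the Rudin--Sadullaev estimate $\|x\|\leq C_2(1+\|y\|)^{s}$ on $\Gamma_f$ for some $s>0$ (one may take $s=\deg\Gamma_f-\mathrm{d}(f)+1$ by \cite{TW1}, as Remark \ref{uwaga} records). This polynomial bound is all your Liouville argument needs: substituting it for the linear one gives $|a_j(y)|\leq\mathrm{const.}(1+\|y\|)^{rsj}$ on the non-critical locus, and the rest of your proof goes through verbatim and coincides with the paper's. A sanity check that the linear bound cannot be right in general: it would force $\deg a_j\leq j\mathcal{B}(g)$, whereas the paper's Remark \ref{uwaga} only claims $\deg a_j\leq j\mathcal{B}(g)(\deg\Gamma_f-\mathrm{d}(f)+1)$.
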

\begin{proof}
This follows directly from the expressions for the coefficients:
$$
a_j(y)=(-1)^j\sum_{1\leq \iota_1<\ldots<\iota_j\leq \mathrm{d}(f)}g(x^{(\iota_1)})\cdot\ldots\cdot g(x^{(\iota_j)}),
$$
where $f^{-1}(y)=\{x^{(1)},\dots, x^{(\mathrm{d}(f))}\}$ consists of $\mathrm{d}(f)$ points. 

Since $g\in\mathcal{O}_c^\mathrm{a}(A)$, there is $|g(x)|\leq C_1(1+||x||^{r})$ for $x\in A$ with some constants $C_1,r>0$ (cf. $(*)$  and \cite{Dpreprint} Lemma 3.1). By assumption, $\Gamma_f$ has proper projection onto ${\C}^k$ and so by the Rudin-Sadullaev Theorem (see \cite{L}),
$$
\Gamma_f\subset\{(x,y)\in{\C}^m\times{\C}^k\mid ||x||\leq C_2(1+||y||)^s\}
$$
for some constants $C_2,s>0$. Therefore, for any $x\in A$, $||x||\leq C_2(1+||f(x)||)^s$. We obtain thus
$$
|g(x)|\leq C_1(1+C_2^r(1+||f(x)||)^{rs})\leq C_1 2\max\{1,C_2^r\}(1+||f(x)||)^{rs}.
$$
This means in particular that for any $y$ not critical for $f$, and for all $j$, 
$$
|a_j(y)|\leq\mathrm{const.} (1+||y||)^{rsj},
$$
since $y=f(x^{(j)})$. By continuity this inequality can be extended to the whole of ${\C}^k$ and so by Liouville's Theorem $a_j\in{\C}[y_1,\dots, y_k]$ for all $j$. 
\end{proof}
\begin{rem}\label{uwaga}
In the proof above we may put $r=\mathcal{B}(g)$. On the other hand, by \cite{TW1}, we can also take $s=\deg\Gamma_f-\mathrm{d}(f)+1$. Therefore, we obtain $$\deg a_j\leq j\mathcal{B}(g)(\deg\Gamma_f-\mathrm{d}(f)+1).$$
\end{rem}

The following Lemma of P\l oski is crucial when using the characteristic polynomial:
\begin{lem}[\cite{P} lemma (2.1)]\label{P} Let $P(x,t)=t^d+a_1(x)t^{d-1}+\ldots+a_d(x)$ be a polynomial with $a_j\in{\C}[x_1,\dots,x_k]$. Then $\delta(P):=\max_{j=1}^d ({\deg a_j}/{j})$ is the minimal exponent $q>0$ such that 
$$
\{(x,t)\in{\C}^k\times{\C}\mid P(x,t)=0, |x|\geq R\}\subset\{(x,t)\in{\C}^k\times{\C}\mid |t|\leq C|x|^q\}
$$
for some $R, C>0$. 
\end{lem}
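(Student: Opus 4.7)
The plan is to prove the statement in two halves, showing first that $q=\delta(P)$ satisfies the required inclusion, and then that any $q$ for which the inclusion holds must satisfy $q\geq\delta(P)$.

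For the first half, I would proceed by extracting a bound directly from the defining equation $P(x,t)=0$, which reads
$$
t^d = -a_1(x)t^{d-1}-\ldots-a_d(x).
$$
Using that $a_j\in{\C}[x_1,\dots,x_k]$ gives $|a_j(x)|\leq C_j(1+|x|)^{\deg a_j}\leq C_j'|x|^{j\delta(P)}$ for $|x|\geq 1$, since $\deg a_j\leq j\delta(P)$ by definition of $\delta(P)$. Taking moduli and dividing by $|x|^{d\delta(P)}$, the substitution $u=|t|/|x|^{\delta(P)}$ yields a one-variable polynomial inequality $u^d\leq\sum_{j=1}^d C_j' u^{d-j}$, which forces $u$ to be bounded by some universal constant $C$. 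Hence $|t|\leq C|x|^{\delta(P)}$ on the zero set for $|x|\geq 1$, establishing that $q=\delta(P)$ works.

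For the minimality half, suppose some $q>0$ satisfies the inclusion with constants $R,C$. The clever observation is to reconstruct the coefficients $a_j$ from the roots via Vieta's formulas: for each fixed $x$, if $t_1(x),\dots,t_d(x)$ are the roots (with multiplicity) of $P(x,\cdot)$, then $a_j(x)=(-1)^j e_j(t_1(x),\dots,t_d(x))$, where $e_j$ is the $j$-th elementary symmetric polynomial. For $|x|\geq R$, each root satisfies $|t_i(x)|\leq C|x|^q$, so
$$
|a_j(x)|\leq \binom{d}{j}C^j|x|^{jq},\qquad |x|\geq R.
$$
The last step is a Liouville-type argument: a polynomial $p\in{\C}[x_1,\dots,x_k]$ satisfying $|p(x)|\leq M|x|^N$ for $|x|\geq R$ has $\deg p\leq N$. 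This one-variable reduction is immediate by restricting to a generic complex line $x=sv$ and noting that a polynomial in $s$ with growth $O(|s|^N)$ has degree at most $N$. Applied to $a_j$, this yields $\deg a_j\leq jq$ for all $j$, hence $\delta(P)\leq q$.

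The argument is essentially elementary; the only subtle point is the minimality half, where the passage from a bound on the zeros to a bound on the coefficients via elementary symmetric functions is the decisive idea. No serious obstacle arises beyond bookkeeping of constants in the first half, and beyond recognising that polynomial growth forces degree bounds in the second.
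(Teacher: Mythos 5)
Your proof is correct, and it follows the standard argument for this result (the paper itself gives no proof, quoting the lemma directly from P\l oski's article, where essentially the same two steps appear: a Cauchy-type root bound for the inclusion with $q=\delta(P)$, and Vieta's formulas plus a polynomial-growth degree bound for minimality). The only point worth noting is the degenerate case $\delta(P)=0$ (all $a_j$ constant), where no minimal $q>0$ exists; this is an imprecision of the statement as quoted rather than of your argument.
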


We present a simple application of the characteristic polynomial in the polynomial case. Let $f\colon {\C}^m\to{\C}$ be a polynomial of degree $d$ and such that the complex gradient (with respect to $\partial/\partial z_j$) $\nabla f\colon{\C}^m\to{\C}^m$ is proper with multiplicity $\mu$ (\footnote{Which is equivalent to  saying that the fibres are finite and the sum of the local multiplicities always gives $\mu$.}). Let $P(y,t)$ be the characteristic polynomial of $f$ with respect to $\nabla f$. By Proposition \ref{wman} it is a polynomial $P\in{\C}[y_1,\dots,y_m,t]$ of the form
$$
P(y,t)=\prod_{x\in \nabla f^{-1}(y)}(t-f(x))=t^{\mu}+a_1(y)t^{\mu-1}+\ldots+a_{\mu}(y).
$$

Let us write $D=\deg\Gamma_{\nabla f}$. Obviously, $D\geq \mu$.
\begin{prop}
In the setting introduced above, 
$$
|f(x)|^\Theta\leq \mathrm{const.}||\nabla f(x)||,\quad ||x||\gg 1
$$
where $$\Theta=\frac{1}{d(D-\mu+1)}\in \left(0,\frac{1}{d}\right].$$
\end{prop}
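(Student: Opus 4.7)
The plan is to read off the desired inequality from P{\l}oski's Lemma \ref{P} applied to the characteristic polynomial $P(y,t)$ of $f$ relative to $\nabla f$, using the coefficient bound supplied by Remark \ref{uwaga}.

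\medskip

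\noindent\textbf{Step 1 (Degree bound).} Since $f$ is a polynomial of degree $d$, the Serre Graph Theorem (as recalled at the end of Section~1) gives $\mathcal{B}(f)=\deg f=d$. Applying Remark \ref{uwaga} with $g=f$ and with the proper map $\nabla f$ of geometric degree $\mathrm{d}(\nabla f)=\mu$ and graph degree $\deg\Gamma_{\nabla f}=D$, we obtain
$$
\deg a_j\leq j\,d\,(D-\mu+1),\qquad j=1,\dots,\mu,
$$
and therefore
$$
\delta(P)=\max_{j=1}^{\mu}\frac{\deg a_j}{j}\leq d(D-\mu+1).
$$

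\noindent\textbf{Step 2 (Applying P{\l}oski).} By Lemma \ref{P} there exist $R,C>0$ such that for every $(y,t)\in{\C}^m\times{\C}$ satisfying $P(y,t)=0$ and $\|y\|\geq R$ one has
$$
|t|\leq C\,\|y\|^{d(D-\mu+1)}.
$$

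\noindent\textbf{Step 3 (Plugging in $y=\nabla f(x)$, $t=f(x)$).} For every $x\in{\C}^m$ such that $y:=\nabla f(x)$ is not critical for $\nabla f$, the defining formula $(\#)$ shows that $f(x)$ appears among the roots of $P(y,\cdot)$; hence $P(\nabla f(x),f(x))=0$. Since $P$ is a polynomial and both $\nabla f$ and $f$ are continuous, the identity $P(\nabla f(x),f(x))=0$ extends from the dense open set of non-critical points to all of ${\C}^m$. Combining this with Step~2 yields
$$
|f(x)|\leq C\,\|\nabla f(x)\|^{d(D-\mu+1)}\quad\text{whenever}\quad \|\nabla f(x)\|\geq R.
$$

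\noindent\textbf{Step 4 (Properness).} Since $\nabla f\colon{\C}^m\to{\C}^m$ is proper, $\|\nabla f(x)\|\to\infty$ as $\|x\|\to\infty$, so for $\|x\|\gg 1$ the hypothesis $\|\nabla f(x)\|\geq R$ is automatic. Raising the last inequality to the power $\Theta=1/[d(D-\mu+1)]$ gives the desired estimate; the bound $\Theta\leq 1/d$ follows from $D\geq\mu$.

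\medskip

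The only delicate point is the legitimacy of evaluating $P$ at $(\nabla f(x),f(x))$ for \emph{every} $x$, including those in the critical locus: this is handled by the continuous extension of the coefficients of $P$ (the Riemann extension step already used when defining $P$). Everything else reduces to the bookkeeping of exponents in Remark \ref{uwaga} and the direct application of Lemma \ref{P}.
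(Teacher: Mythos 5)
Your proof is correct and matches the paper's intended argument: the authors themselves say the proposition ``follows essentially from Lemma (2.1) of P{\l}oski and Remark \ref{uwaga}'', which is precisely your route, and your bookkeeping of the exponents ($\mathcal{B}(f)=d$, $\delta(P)\leq d(D-\mu+1)$, properness of $\nabla f$ to pass from $\|\nabla f(x)\|\geq R$ to $\|x\|\gg 1$) is accurate. The only cosmetic difference is that the paper's written-out ``direct argument'' replaces P{\l}oski's lemma with the elementary root estimate $|f(x)|\leq 2\max_{j}|a_j(\nabla f(x))|^{1/j}$, which rests on the same coefficient degree bounds.
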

\begin{proof}
This follows essentially from Lemma \label{P} and Remark \ref{uwaga}, but we may also give a direct argument. Clearly, $f(x)$ is a root of $P(\nabla f(x),\cdot)$. Therefore, 
$$
||f(x)||\leq 2\max_{j=1}^\mu |a_j(\nabla f(x))|^\frac{1}{j}.
$$
By Remark \ref{uwaga} we have 
$$
|a_j(\nabla f(x))|^\frac{1}{j}\leq \mathrm{const}\cdot ||\nabla f(x)||^{d(D-\mu+1)}
$$
whence the result.
\end{proof}

\section{Nullstellens\"atze and degree of cycles of zeroes}
We shall deal first with the $0$-dimensional case, i.e. we assume that $f=(f_1,\dots, f_n)$ is a proper c-algebraic map on a set $A\subset{\C}^m$ of pure dimension $k>0$ (it follows that $n\geq k$). 
We assume that $f^{-1}(0)\neq\varnothing$. Then the intersection of the graph $\Gamma_f$ with ${\C}^m\times\{0\}^n$ is isolated but possibly improper (\footnote{Recall that the intersection $X\cap Y$ of two pure-dimensional analytic sets $X,Y$ in a manifold $M$ is called proper, if at any $a\in X\cap Y$ it has the minimal possible dimension $\dim X+\dim Y-\dim M$.}). We may compute the improper intersection indices $$i_a(f):=i(\Gamma_f\cdot({\C}^m\times\{0\}^n);a),\quad a\in f^{-1}(0)$$
along \cite{ATW} (see also \cite{Dm}). 

The properness of $f$ implies by the Chevalley-Remmert Theorem that $f(A)$ is algebraic. If it happens to be {\sl irreducible}, then we can define the {\it generalized geometric degree} $\mathrm{d}(f)$ to be the degree (sheet number) of the branched covering (cf. \cite{Ch}) $f|_{A\setminus f^{-1}(\mathrm{Sng}f(A))}$ over the connected manifold $\mathrm{Reg}f(A)$. 

\begin{ex}
It should be pointed out that although $\mathrm{d}(f)$ correpsonds to the cardinality of the generic nonempty fibre of $f$, it may not be the greatest cardinality of the fibre: if $A$ denotes the graph of $t\mapsto (t^2-1,t(t^2-1))$ in ${\C}^3$ and $f$ is the projection onto the last two coordinates, then $\mathrm{d}(f)=1$, whereas $\#f^{-1}(0)=2$.
\end{ex}

For $n=k$, which means that the intersection is proper and $f$ is surjective, the numbers $i_a(f)$ coincide with the usual local geometric multiplicities $m_a(f)$ (\footnote{I.e. $m_a(f)=\limsup_{v\to f(a)}\#(U\cap f^{-1}(v))$ where $U$ is a neighbourhood of $a$ such that $\overline{U}\cap f^{-1}(f(a))=\{a\}$ (the result is the same for all $U$ small enough); see \cite{Ch}.}) and the Stoll formula yields $\mathrm{d}(f)=\sum_{a\in f^{-1}(0)}m_a(f)$. Thanks to a Spodzieja-type reduction as in \cite{Sp},  
we obtain the following general c-algebraic counterpart of a result of E. Cygan \cite{Cg}: 

\begin{thm}\label{Nullst}
In the setting introduced above, let $g\in\mathcal{O}_c^\mathrm{a}(A)$ be such that $g^{-1}(0)\supset f^{-1}(0)$. Then, if $f(A)$ is irreducible, there are $n$ functions $h_j\in\mathcal{O}_c^\mathrm{a}(A)$ such that 
$$
g^{\mathrm{d}(f)\deg f(A)}=\sum_{j=1}^n h_j f_j\quad\hbox{\it on the whole of}\quad A.
$$
Moreover, $$\mathrm{d}(f)\deg f(A)\geq \sum_{a\in f^{-1}(0)}i_a(f)$$ and equality holds, provided $k=n$ (in which case $\deg f(A)=1$).
\end{thm}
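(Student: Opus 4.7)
The plan is a Spodzieja-type reduction combining Proposition \ref{wman} applied to a generic linear projection with a GCD manipulation in $\C[t]$ to extract the required power of $g$. One may assume (by decomposing into irreducible components) that $A$ itself is irreducible and that $g\not\equiv0$, so that $g$ is non-constant on $A$.

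First I would pick a generic linear surjection $L\colon\C^n\to\C^k$ so that $L|_{f(A)}$ is a Noether normalisation of degree $\deg f(A)$. Since a generic fibre of $\tilde f_L:=L\circ f\colon A\to\C^k$ over $w\in\C^k$ is $f^{-1}(L^{-1}(w)\cap f(A))$---a generic transversal intersection with $f(A)$ giving $\deg f(A)$ points, each with $\mathrm{d}(f)$ preimages---we have $\mathrm{d}(\tilde f_L)=\mathrm{d}(f)\deg f(A)=:N$. Proposition \ref{wman} provides a polynomial $P_L(w,t)=t^N+A_{L,1}(w)t^{N-1}+\ldots+A_{L,N}(w)\in\C[w,t]$ with $P_L(\tilde f_L(x),g(x))=0$ on $A$. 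Writing $P_L(w,t)-P_L(0,t)=\sum_\ell w_\ell R_{L,\ell}(w,t)$, substituting $w=\tilde f_L(x)$, and using $w_\ell=\sum_iL_{\ell,i}f_i(x)$, I would obtain
\[
P_L(0,g(x))\in(\tilde f_{L,1},\ldots,\tilde f_{L,k})\oa(A)\subset(f_1,\ldots,f_n)\oa(A).
\]

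Next, factor $P_L(0,t)=t^{\mu(L)}S_L(t)$ with $S_L(0)\ne0$. The key claim is that no nonzero $c$ can be a common root of all the $S_L$'s as $L$ varies: such a $c$ would force $f(g^{-1}(c))$ to meet $\ker L\cap f(A)\setminus\{0\}$ for every generic $L$, hence $\overline{f(g^{-1}(c))}=f(A)$ (using irreducibility of $f(A)$), whence $\dim g^{-1}(c)=k=\dim A$ and $g\equiv c$, contradicting $g$ non-constant. Bézout in $\C[t]$ therefore provides $L_1,\ldots,L_r$ and $U_i\in\C[t]$ with $\sum_iU_i(t)S_{L_i}(t)=1$. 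Setting $\mu:=\max_i\mu(L_i)\le N$, multiplying by $g^\mu$, and noting that $g^\mu S_{L_i}(g)=g^{\mu-\mu(L_i)}\cdot P_{L_i}(0,g)\in(f_1,\ldots,f_n)\oa(A)$, I would conclude that $g^\mu$, and hence $g^N=g^{\mu}\cdot g^{N-\mu}$, belongs to $(f_1,\ldots,f_n)\oa(A)$, giving the desired identity $g^N=\sum_ih_if_i$.

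The moreover part comes from improper intersection theory. A generic $(n-k)$-affine subspace transversal to $f(A)$ meets it in $\deg f(A)$ points, each with $\mathrm{d}(f)$ preimages under $f$, so $\deg\Gamma_f=\mathrm{d}(f)\deg f(A)$; the Bézout-type bound from \cite{ATW} applied to $\Gamma_f\cdot(\C^m\times\{0\}^n)$ then yields $\sum_{a\in f^{-1}(0)}i_a(f)\le\deg\Gamma_f$. When $k=n$ the intersection is proper, $\deg f(A)=1$, $i_a(f)=m_a(f)$, and Stoll's formula produces equality. The main non-routine step is the GCD argument in $\C[t]$, which is the heart of the Spodzieja-type reduction and crucially uses irreducibility.
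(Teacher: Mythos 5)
Your argument for the ideal--membership identity is essentially sound and follows the same overall strategy as the paper: a generic linear epimorphism $\pi\colon\C^n\to\C^k$ in Noether position with respect to $f(A)$, the identity $\mathrm{d}(\pi\circ f)=\mathrm{d}(f)\deg f(A)$, and the characteristic polynomial of $g$ relative to $\pi\circ f$. Your additional B\'ezout/GCD step in $\C[t]$ over several projections is a legitimate (and welcome) way of handling the points of $\Ker L\cap f(A)$ other than the origin, on whose $f$-preimages $g$ has no reason to vanish; the paper works with a single $\pi$ and invokes the case $n=k$ directly. One caveat: the opening reduction ``we may assume $A$ irreducible'' is not harmless for an ideal--membership statement (the functions $h_j$ produced on separate components need not agree on their intersections, and the exponents may differ), but you do not actually need it. Indeed, every irreducible component $A_i$ satisfies $f(A_i)=f(A)\ni 0$ (the fibres of the proper map $f$ are compact algebraic, hence finite, so $f(A_i)$ is irreducible of dimension $k$ inside the irreducible $k$-dimensional $f(A)$); thus if $\dim g^{-1}(c)=k$ then $g\equiv c$ on some $A_i$ while $g$ vanishes on $f^{-1}(0)\cap A_i\neq\varnothing$, which is the desired contradiction for $c\neq 0$ without any irreducibility assumption on $A$.

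The ``moreover'' part, however, contains a genuine gap: the identity $\deg\Gamma_f=\mathrm{d}(f)\deg f(A)$ on which you rely is false. Take $A=\{(t,t^3)\mid t\in\C\}\subset\C^2$ and $f(x_1,x_2)=x_1$; then $f$ is proper with $\mathrm{d}(f)=\deg f(A)=1$, yet $\Gamma_f=\{(t,t^3,t)\}$ has degree $3$. Your construction exhibits only an $(m+n-k)$-plane of the special product form $\C^m\times T$ meeting $\Gamma_f$ in $\mathrm{d}(f)\deg f(A)$ points, which yields $\mathrm{d}(f)\deg f(A)\leq\deg\Gamma_f$, possibly strictly; consequently the B\'ezout-type estimate $\sum_{a}i_a(f)\leq\deg\Gamma_f$ is in general strictly weaker than the asserted bound $\sum_a i_a(f)\leq\mathrm{d}(f)\deg f(A)$. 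The paper proceeds differently: it applies the Stoll formula to $\tilde f=\pi\circ f$, writing $\mathrm{d}(\tilde f)$ as the sum of the proper intersection indices of $\Gamma_{\tilde f}$ with $\C^m\times\{0\}^k$ over \emph{all} points of $\tilde f^{-1}(0)$, drops the terms not lying over $f^{-1}(0)$ to get the inequality, and then identifies the remaining local multiplicities with those of $\Gamma_f\cap(\C^m\times\{0\}^k\times\C^{n-k})$, which dominate the improper indices $i_a(f)$. Some argument of this kind, comparing $i_a(f)$ with the multiplicities of the reduced map $\tilde f$ rather than with $\deg\Gamma_f$, is needed to complete your proof.
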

In the proof we will need the following refinement of \cite{Dm} Theorem 2.3:
\begin{prop}\label{m}
Let $A\subset{\C}^m\times D$ be a pure $k$-dimensional analytic set with proper projection $p|_A$ where $p\colon {\C}^m\times D\ni (x,y)\mapsto y\in D$ and $D\subset{\C}^n$ is a domain. Assume that $X:=p(A)$ is irreducible, $0\in X$ and let $\mu$ denote the multiplicity of the branched covering $p|_{A\setminus p^{-1}(\mathrm{Sng} X)}$ over the connected manifold $\mathrm{Reg} X$. Then 
$$
\mu\cdot \deg_0 X=\sum_{a\in p^{-1}(0)\cap A}i(A\cdot\Ker p; a),
$$
where $\deg_0 X$ stands for the local degree (Lelong number) of $X$ at the origin.
\end{prop}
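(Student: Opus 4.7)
My plan is to reduce to the case of a proper holomorphic map between two pure $k$-dimensional analytic sets, where the classical Stoll multiplicity formula applies directly. The Lelong number $\deg_0 X$ will enter through the generic sheet number of an auxiliary linear projection of $X$ onto a $k$-dimensional subspace.

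First, I would pick a linear subspace $W\subset{\C}^n$ of dimension $n-k$ through the origin with $W\cap C_0(X)=\{0\}$, where $C_0(X)$ denotes the tangent cone of $X$ at $0$; a generic $(n-k)$-plane has this property because $C_0(X)$ is pure $k$-dimensional. Choosing a complementary $k$-dimensional subspace $V\cong{\C}^k$ and the projection $q\colon{\C}^n\to V$ along $W$, the standard geometric description of the local degree (cf.\ \cite{Ch}) yields that, on a sufficiently small neighbourhood of $0\in X$, the restriction $q|_X$ is a branched covering of a neighbourhood of $0\in V$ of multiplicity exactly $\deg_0 X$. Setting $\tilde p:=q\circ p\colon A\to V$, the transversality $W\cap C_0(X)=\{0\}$ gives $q^{-1}(0)\cap X=\{0\}$ locally, so on a neighbourhood $U$ of the finite set $p^{-1}(0)\cap A$ the map $\tilde p|_U$ is proper (composition of the proper $p$ with a locally finite branched cover) with generic sheet number $\mu\cdot\deg_0 X$ by multiplicativity. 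Since $A$ and $V$ are both pure of dimension $k$, the Stoll formula yields
$$
\mu\cdot\deg_0 X \;=\; \sum_{b\in\tilde p^{-1}(0)\cap U} m_b(\tilde p|_U),
$$
and the fibre $\tilde p^{-1}(0)\cap U$ equals $p^{-1}(0)\cap A$ locally.

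The main --- and hardest --- step is now the identification
$$
m_a(\tilde p) \;=\; i(A\cdot\Ker p;\,a)
$$
for every $a\in p^{-1}(0)\cap A$. The left-hand side coincides, as $\tilde p^{-1}(0)=A\cap({\C}^m\times W)$ as analytic sets, with the classical (proper) intersection index $i_a(A\cdot({\C}^m\times W))$ in the ambient ${\C}^m\times{\C}^n$; this is standard for the local multiplicity of a proper equidimensional holomorphic map. The right-hand side is the ATW improper intersection index of $A$ with the $m$-dimensional $\Ker p={\C}^m\times\{0\}$. I would invoke the generic-linear-completion principle of the extended intersection theory \cite{ATW}: for a generic $(n-k)$-dimensional linear completion $W$ of $\{0\}$ in ${\C}^n$, one has $i_a(A\cdot({\C}^m\times W))=i(A\cdot\Ker p;\,a)$. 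The delicate point to verify is that this genericity condition is compatible with the transversality $W\cap C_0(X)=\{0\}$ from the first paragraph; both conditions carve out Zariski-dense open subsets of the Grassmannian of $(n-k)$-planes through $0$, so one may choose $W$ satisfying both simultaneously. Once this identification is secured, summing over $a$ in the preceding displayed Stoll identity gives precisely the claim of the proposition.
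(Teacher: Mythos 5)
Your strategy --- compose $p$ with a linear projection $q$ of $\C^n$ along a generic $(n-k)$-plane $W$ realizing the Lelong number $\deg_0 X$, and apply the Stoll formula to the equidimensional proper map $\tilde p:=q\circ p$ --- is genuinely different from the paper's proof, which instead decomposes the germ $(X,0)$ into its local irreducible branches $X_j$, regroups the fibre $p^{-1}(0)\cap A$ over those branches, and applies Theorem 2.3 of \cite{Dm} to each group. The first half of your argument is sound: on $p^{-1}(\Omega)\cap A$ for a suitable neighbourhood $\Omega$ of $0$ in $X$, the map $\tilde p$ is a branched covering with exactly $\mu\cdot\deg_0 X$ sheets (connectedness of $\mathrm{Reg}\,X$ guarantees $\mu$ preimages over each of the $\deg_0 X$ local sheets of $q|_X$, so the possible local reducibility of $X$ at $0$ is handled automatically), its zero fibre is $p^{-1}(0)\cap A$, and $m_a(\tilde p)=i_a(A\cdot(\C^m\times W))$ is the classical statement for proper isolated intersections.

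The gap sits exactly where you yourself locate the main difficulty: the identity $i_a(A\cdot(\C^m\times W))=i(A\cdot\Ker p;a)$ for generic $W$. You attribute this ``generic-linear-completion principle'' to \cite{ATW}, but that paper constructs the improper isolated intersection index and compares it with Draper's multiplicity in the proper case; it does not, in the form you need, assert that the improper index is computed by a generic proper linear completion of $\Ker p$. In the present projection setting that identity is essentially equivalent to Theorem 2.3 of \cite{Dm} (the formula $\tilde m_a(p|_A)\cdot\deg_0 X=i_a(p|_A)$ for a locally irreducible image), i.e.\ to the very external input on which the paper's proof rests; so you have replaced the crux of the proposition by an unproved assertion of the same depth. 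To close the argument you must either prove the completion principle --- including that the admissible $W$ form a dense open subset of the Grassmannian, so that one $W$ can be chosen compatibly with $W\cap C_0(X)=\{0\}$ and simultaneously for all the finitely many points $a$ of the fibre --- or fall back on \cite{Dm} Theorem 2.3, which however requires local irreducibility of $X$ at $0$ and hence the branch-by-branch bookkeeping that your approach was designed to avoid.
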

\begin{rem}
Asearlier we are dealing here with a possibly improper isolated intersection $A\cap \Ker p$ and we are using \cite{ATW} to compute the intersection multiplicities which we may denote as earlier $i_a(p|_A):=i(A\cdot\Ker p; a)$.
\end{rem}
\begin{proof}[Proof of Proposition \ref{m}] 
We can find a neighbourhood $U$ of $0\in{\C}^n$ and pairwise disjoint neighbourhood $V_1,\dots, V_\nu$ ($\nu\leq \mu$) of the points in the fibre $p^{-1}(0)\cap A$ such that $U\cap X=\bigcup_{j=1}^r X_j$ corresponds to the decomposition of the germ $(X,0)$ into irreducible components and $p^{-1}(U)=\bigcup_{i=1}^\nu V_i\cap A$. We regroup the sets $$W_j:=\bigcup_{s=1}^{t_j}V_{i_s}\cap A$$ according to their projection, i.e. to obtain $W_j$ we take all the sets $V_{i_s}\cap A$, $s=1,\dots,t_j$, for which $p(V_i\cap A)=X_j$. We get as much sets $W_j$ as there are components $X_j$ and $p|_{W_j}\colon W_j\to X_j$ is proper and becomes a $\mu$-sheeted branched covering over $\mathrm{Reg} X_j$. 

Moreover, the points of $p^{-1}(0)\cap A$ form groups $\{a_{i_1},\dots,a_{i_{t_j}}\}=p^{-1}(0)\cap W_j$ and we may compute the {\it regular multiplicity} (cf.\cite{Dm}) $$\tilde{m}_{a_{i_s}}(p|_A)=\limsup_{\mathrm{Reg}p(X)\ni y\to 0}\#p^{-1}(y)\cap A\cap V_{i_s}=\tilde{m}_{a_{i_s}}(p|_{W_j}),$$ provided $V_{i_s}$ is small enough (which, of course, we may assume with no harm forthe generality). Then, clearly, $\mu=\sum_{s=1}^{t_j}\tilde{m}_{a_{i_s}}(p|_A)$. Finally, by \cite{Dm} Theorem 2.3,
$$
\tilde{m}_{a_{i_s}}(p|_{W_j})\cdot\deg_0 X_j=i_a(p|_{W_j})
$$
and $i_{a_{i_s}}(p|_{W_j})=i_{a_{i_s}}(p|_A)$. Therefore,
\begin{align*}
\sum_{i=1}^\nu i_{a_i}(p|_A)&=\sum_{j=1}^r\sum_{s=1}^{t_j}\left(\tilde{m}_{a_{i_s}}(p|_{W_j})\cdot\deg_0 X_j\right)=\\
&=\sum_{j=1}^r \mu\cdot \deg_0 X_j=\mu\cdot\deg_0 X
\end{align*}
as required.
\end{proof}

\begin{proof}[Proof of Theorem \ref{Nullst}]
First, let us deal with the proper intersection case, i.e. let us assume that $k=n$. Let $P_g$ be the characteristic polynomial of $g$ relative to $f$ as in $(\#)$. From the definition we have clearly $P_g(f(x),g(x))=0$ for $x\in A$, which means 
$$
g(x)^{\mathrm{d}(f)}=-a_1(f(x))g(x)^{\mathrm{d}(f)-1}+\ldots-a_{\mathrm{d}(f)}(f(x)).
$$
Now, for any $j$, $a_j\in{\C}[y_1,\dots, y_k]$ and since $g=0$ on $f^{-1}(0)$, it follows from the expression of $a_j$ (see the proof of Proposition \ref{wman}) that $a_j(0)=0$ for any $j$. Therefore $a_j(y)=\sum_{\iota=1}^k a_{j,\iota}(y)y_\iota$ with $a_{j,\iota}\in{\C}[y_1,\dots, y_k]$ and the assertion follows.

Now, assume that $k>n$ and write $X:=f(A)$. Then $X$ is an irreducible $k$-dimensional algebraic subset of ${\C}^n$ and for the generic epimorphism $\pi\colon{\C}^n\to{\C}^k$ we have $\mathrm{d}(\pi|_X)=\deg X$. Consider the proper c-algebraic map $\tilde{f}:=\pi\circ f$. From the first part of the proof we get 
$$
g^{\mathrm{d}(\tilde{f})}=\sum_{j=1}^k h_j\tilde{f}_j,
$$
but $\tilde{f}_j=(\pi\circ f)_j=(\pi_j\circ f)$ and $\pi_j(y)=\sum_{i=1}^n\alpha_{ji}y_i$ which yields the assertion sought after, provided we show $\mathrm{d}(\tilde{f})=\mathrm{d}(f)\cdot \deg X$. But for the generic $x\in{\C}^k$, the fibre $\pi^{-1}(x)$ lies entirely in $\mathrm{Reg} A$ and does not meet the critical set of the $\mathrm{d}(f)$-sheeted branched covering $f|_{A\setminus f^{-1}(\mathrm{Sng}X)}$ over the connected manifold $\mathrm{Reg} X$, whence it follows that $\mathrm{d}(\tilde{f})=\mathrm{d}(f)\cdot \deg X$. 

To prove the second part of the assertion we may assume that the coordinates in ${\C}^n$ are chosen so that $\pi$ is the projection onto the first $k$ of them. Put $T:={\C}^m\times\{0\}^k$ and take the linear projection $\tau$ such that $\Ker \tau=T$. 
Then, by the Stoll Formula
$$
\mathrm{d}(\tilde{f})=\mathrm{d}(\tau|_{\Gamma_{\tilde{f}}})=\sum_{a\in T\cap \Gamma_{\tilde{f}}}i(T\cdot\Gamma_{\tilde{f}};a)
$$
where now the intersection multiplicites $i(T\cdot\Gamma_{\tilde{f}};a)$ are computed along Draper, as we are dealing with an isolated proper intersection. In particular $\mathrm{d}(\tau|_{\Gamma_{\tilde{f}}})\geq \sum_{b\in f^{-1}(0)} i(T\cdot\Gamma_{\tilde{f}};(b,0))$ where $0\in{\C}^k$. 

Observe that $(\mathrm{id}_{{\C}^m}\times\pi)|_{\Gamma_f}=\Gamma_{\tilde{f}}$ and this is a one to one correspondence. Put $N:={\C}^m\times\{0\}^k\times{\C}^{n-k}$. It is classical in proper intersection theory that $i(\Gamma_{\tilde{f}}\cdot T;(b,0))=m_{(b,0)}(\tau|_{\Gamma_{\tilde{f}}})$. Similarly, $i(\Gamma_f\cdot N;(b,0))=m_{(b,0)}(p|_{\Gamma_f})$ (here $0\in{\C}^n$) for the linear projection $p$ with $\Ker p=N$. Now, if $z\in{\C}^k$, then the fibres $\tau^{-1}(z)\cap\Gamma_{\tilde{f}}\cap (U\times V)$ and $p^{-1}(z)\cap\Gamma_f\cap(U\times V\times W)$ are bijective by $\mathrm{id}_{{\C}^m}\times \pi$, where $b\in U\subset{\C}^m$, $0\in V\subset{\C}^k$, $0\in W\subset{\C}^{n-k}$ are sufficiently small neighbourhoods. This implies that $m_{(b,0)}(\tau|_{\Gamma_{\tilde{f}}})=m_{(b,0)}(p|_{\Gamma_f})$ and we are done.
\end{proof}
\begin{rem}
Example 3.3 from \cite{D2} shows that the coefficients $h_j$ may well be strictly c-algebraic, i.e. having no holomorphic extension onto a neighbourhood of $A$ in ${\C}^m$ (even locally).
\end{rem}

Theorem \ref{Nullst} can be further generalized in two directions. The first one is the c-algebraic counterpart of Lemma 1.1 from \cite{PT} (compare also \cite{D} for the c-holomorphic version which was much more straightforward) and its important consequence.
\begin{prop}\label{PT}
Let $f=(f_1,\dots,f_k)\in{\oa}(A,{\C}^k)$ be proper on the pure $k$-dimensional set $A\subset{\C}^m$, with geometric degree $\mathrm{d}(f)$. Fix $\ell\in\{1,\dots,k\}$ and assume that $g\in{\oa}(A)$ satisfies $g^{-1}(0)\supset \bigcap_{j=1}^\ell f^{-1}_j(0)$. Then we can find $\ell$ functions $h_j\in{\oa}(A)$ for which
$$
g^{\mathrm{d}(f)}=\sum_{j=1}^\ell h_j f_j,\quad \textit{on}\> A.
$$
\end{prop}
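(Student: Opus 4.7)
The plan is to follow the characteristic-polynomial strategy of Theorem \ref{Nullst}, but upgrading the vanishing statement for the coefficients: instead of vanishing at a single point $0\in{\C}^k$, they will be forced to vanish on a full linear subspace. By Proposition \ref{wman}, the characteristic polynomial of $g$ relative to $f$,
$$
P_g(y,t)=t^{\mathrm{d}(f)}+a_1(y)t^{\mathrm{d}(f)-1}+\dots+a_{\mathrm{d}(f)}(y),
$$
has coefficients $a_j\in{\C}[y_1,\dots,y_k]$ and satisfies $P_g(f(x),g(x))=0$ on all of $A$ (by continuity from the non-critical locus). The target is to show that each $a_j$ lies in the ideal $(y_1,\dots,y_\ell)$, i.e.\ vanishes identically on the linear subspace $V:=\{y\in{\C}^k\mid y_1=\dots=y_\ell=0\}$; plugging $y=f(x)$ and rearranging will then produce the desired representation.

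The heart of the proof is establishing $a_j|_V\equiv 0$. Fix $y_*\in V$: every $z\in f^{-1}(y_*)$ has $f_1(z)=\dots=f_\ell(z)=0$, so by hypothesis $g(z)=0$. When $y_*$ is non-critical for $f$, the product formula $P_g(y_*,t)=\prod_{z\in f^{-1}(y_*)}(t-g(z))=t^{\mathrm{d}(f)}$ yields $a_j(y_*)=0$ at once. When $y_*$ is critical, one approaches $y_*$ by non-critical $y_n\to y_*$ and uses the local branched-covering structure of the proper finite map $f$ near $f^{-1}(y_*)$: the $\mathrm{d}(f)$ points of $f^{-1}(y_n)$ cluster, with local multiplicities $m_z(f)$ summing to $\mathrm{d}(f)$, around the points $z\in f^{-1}(y_*)$, and by continuity of $g$ their $g$-values converge to the corresponding $g(z)=0$. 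Hence $P_g(y_n,t)\to t^{\mathrm{d}(f)}$, and the continuous extension of the coefficients from Proposition \ref{wman} forces $a_j(y_*)=0$.

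Once $a_j|_V\equiv 0$ is known, each $a_j\in{\C}[y_1,\dots,y_k]$ factors as $a_j(y)=\sum_{i=1}^\ell b_{j,i}(y)y_i$ with $b_{j,i}\in{\C}[y_1,\dots,y_k]$; substituting $y=f(x)$ into $P_g(f(x),g(x))=0$ then gives
$$
g(x)^{\mathrm{d}(f)}=\sum_{i=1}^\ell h_i(x)f_i(x),\quad h_i(x):=-\sum_{j=1}^{\mathrm{d}(f)}b_{j,i}(f(x))\,g(x)^{\mathrm{d}(f)-j},
$$
and the $h_i$ are c-algebraic because $\oa(A)$ is a ring containing $g,f_1,\dots,f_k$. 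The main obstacle in the plan is precisely the step showing $a_j|_V\equiv 0$ at critical $y_*\in V$: a priori $V$ could be entirely contained in the critical locus of $f$, so the product formula for $P_g$ need not be available on a Zariski-dense subset of $V$, and one cannot simply close up from the non-critical part. The local branched-covering description of $f$ together with the continuous Riemann-type extension of the coefficients is what circumvents this difficulty.
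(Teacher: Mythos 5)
Your proof is correct and follows essentially the same route as the paper: form the characteristic polynomial $P_g$ of $g$ relative to $f$, show its coefficients $a_j$ vanish on $V=\{y_1=\dots=y_\ell=0\}$, write $a_j=\sum_{i=1}^\ell y_i a_{j,i}$, and substitute $y=f(x)$ into $P_g(f(x),g(x))\equiv 0$. The only divergence is in the verification of $a_j|_V\equiv 0$: the paper observes that the algebraic image $X=\{(f(x),g(x))\mid x\in A\}$ equals $P_g^{-1}(0)$ and that the hypothesis forces $X\cap(\{0\}^\ell\times{\C}^{k-\ell}\times{\C})=\{0\}^\ell\times{\C}^{k-\ell}\times\{0\}$, so $P_g((0,z),t)=t^{\mathrm{d}(f)}$, whereas you reach the same conclusion by properness of $f$ and continuity of $g$ along fibres over non-critical points tending to $V$ --- a valid argument that, as you note, correctly covers the case where $V$ meets only the critical locus.
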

\begin{proof}
By the Chevalley-Remmert Theorem, the set $$X:=\{(f(x),g(x))\in \mathbb{C}^k\times\mathbb{C}\mid x\in A\}$$ is algebraic as the proper (\footnote{This is a simple topological fact: given three locally compact topological spaces $X,Y,Z$ and two continuous functions $\varphi\colon X\to Y$ and $\psi\colon X\to Z$ such that $\psi$ is in addition proper, the function $(\varphi,\psi)\colon X\to Y\times Z$ is proper.}) projection of the graph of $(f,g)$ and it has pure dimension $k$. It is obvious that the characteristic polynomial $P_g$ (having the form $(\#)$) of $g$ relative to $f$ yields $X=P_g^{-1}(0)$. Moreover, 
$$
X\cap (\{0\}^\ell\times\mathbb{C}^{k-\ell}\times\mathbb{C})=\{0\}^\ell\times \mathbb{C}^{k-\ell}\times\{0\}\ \Rightarrow\ a_j|_{\{0\}^\ell\times \mathbb{C}^{k-\ell}}\equiv 0,
$$
for all indices $j$ (which is easy to check starting from the last one, i.e. $a_d$). This allows us to write 
$a_j(w)=y_1a_{j,1}(w)+\ldots+y_\ell a_{j,\ell}(w)$ where $w=(y,z)\in \mathbb{C}^\ell\times\mathbb{C}^{k-\ell}$ and $a_{j,s}$ are polynomials. Eventually, the identity $P_g(f(x),g(x))\equiv 0$ gives the result.
\end{proof}

We are able now to generalize this to the case of a set-theoretical complete intersection for strictly regular sequences in connection with \cite{FPT}, \cite{D2} and \cite{PT}. Suppose that $f\colon A\to{\C}^n$ is c-algebraic, $A$ has pure dimension $k>0$ and $f^{-1}(0)$ is pure $(k-n)$-dimensional. This means exactly that the intersection $\Gamma_f\cap({\C}^m\times\{0\}^k)$ is proper (i.e. it is a set-theoretical complete intersection). In such a case we may consider the {\it algebraic effective cycle of zeroes} of $f$:
$$
Z_f:=\Gamma_f\cdot({\C}^m\times\{0\}^n)=\sum_{j=1}^r i(\Gamma_f\cdot({\C}^m\times\{0\}^n);V_j)V_j,\leqno{(\star)}
$$
where $f^{-1}(0)=\bigcup_{j=1}^r V_j$ is the decomposition into irreducible components and $i(\Gamma_f\cdot({\C}^m\times\{0\}^n);V_j)$ is the intersection multiplicity along $V_j$ computed following Draper \cite{Dr} (see also \cite{Ch}).

Since all $V_j$ are algebraic we may define the {\it degree of the cycle} $Z_f$ to be the number
$$
\deg Z_f=\sum_{j=1}^r i(\Gamma_f\cdot({\C}^m\times\{0\}^n);V_j)\cdot\deg V_j.
$$
Note that for $k=n$ we clearly obtain $\deg Z_f=\mathrm{d}(f)$.

Following \cite{FPT} we introduce the notion of a strictly regular sequence:
\begin{dfn}
A map $f\in{\oa}(A,{\C}^n)$ where $A\subset{\C}^m$ has pure dimension $k>n$, is called {\it strictly regular}, if there are $k-n$ affine forms $L_j\colon{\C}^m\to{\C}$ such that $(f,L_1|_A,\dots,L_{k-n}|_A)\colon A\to{\C}^k$ is proper.
\end{dfn}
\begin{rem}
If $f\in{\oa}(A,{\C}^n)$ is strictly regular, then $f^{-1}(0)$ has pure dimension $k-n$ (cf. \cite{D2}) and so $Z_f$ is well-defined.
\end{rem}

\begin{thm}\label{deg}
Assume $f\in{\oa}(A,{\C}^n)$ is strictly regular. Then for any $g\in\mathcal{O}_c^\mathrm{a}(A)$ such that $g^{-1}(0)\supset f^{-1}(0)$ there are $n$ functions $h_j\in\mathcal{O}_c^\mathrm{a}(A)$ yielding
$$
g^{\deg Z_f}=\sum_{j=1}^n h_jf_j\quad\hbox{\it on the whole of}\ A.
$$
\end{thm}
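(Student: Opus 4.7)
The plan is to invoke Proposition \ref{PT} applied to the proper $\C^k$-valued c-algebraic map
$$\tilde f := (f_1, \dots, f_n, L_1|_A, \dots, L_{k-n}|_A) \colon A \to \C^k$$
furnished by strict regularity, with $\ell = n$. Since $\bigcap_{j=1}^n \tilde f_j^{-1}(0) = f^{-1}(0) \subset g^{-1}(0)$, that proposition yields $h_1, \dots, h_n \in \oa(A)$ with $g^{\mathrm{d}(\tilde f)} = \sum_{j=1}^n h_j f_j$ on $A$. The theorem thus reduces to establishing the geometric identity $\mathrm{d}(\tilde f) = \deg Z_f$.

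To prove this identity, I would first note that $\tilde f$ is a finite surjective branched covering: properness together with Chevalley-Remmert and a dimension count force $\tilde f(A) = \C^k$. Then I would choose $\lambda \in \C^{k-n}$ generic enough that the codimension $(k-n)$ affine subspace
$$H_\lambda := \{x \in \C^m \colon L_j(x) = \lambda_j,\; j = 1, \dots, k-n\}$$
meets each irreducible component $V_j$ of $f^{-1}(0)$ transversally in $\deg V_j$ smooth points. Applying Stoll's formula to $\tilde f$ at the point $(0, \lambda) \in \C^k$ gives
$$\mathrm{d}(\tilde f) = \sum_{a \in \tilde f^{-1}(0, \lambda)} m_a(\tilde f) = \sum_j \sum_{a \in V_j \cap H_\lambda} m_a(\tilde f).$$

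The critical step is then to identify $m_a(\tilde f) = i_j$ for $a \in V_j \cap H_\lambda$, where $i_j$ is the intersection multiplicity $i(\Gamma_f \cdot (\C^m \times \{0\}^n); V_j)$ appearing in $(\star)$. One rewrites $m_a(\tilde f)$ as the local proper intersection multiplicity $i(\Gamma_{\tilde f} \cdot (\C^m \times \{(0, \lambda)\}); (a, 0, \lambda))$, and transports it via the biholomorphism $\Gamma_f \ni (x, y) \mapsto (x, y, L(x)) \in \Gamma_{\tilde f}$ to $i(\Gamma_f \cdot (H_\lambda \times \{0\}^n); (a, 0))$. Associativity of proper intersections then gives
$$\Gamma_f \cdot (H_\lambda \times \{0\}^n) = Z_f \cdot (H_\lambda \times \C^n),$$
and the transversality of $H_\lambda$ to $V_j$ at $a$ forces the multiplicity of the right-hand cycle at $(a, 0)$ to equal $i_j$. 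Summing across all points yields $\mathrm{d}(\tilde f) = \sum_j i_j \cdot \#(V_j \cap H_\lambda) = \sum_j i_j \deg V_j = \deg Z_f$, as required.

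The main obstacle is the identification $m_a(\tilde f) = i_j$, which rests on associativity and transversality within Draper's intersection theory (cf. \cite{Dr}, \cite{Ch}) together with an appropriately generic choice of $\lambda$. The biholomorphism between $\Gamma_f$ and $\Gamma_{\tilde f}$ is the tool that lets us move the computation back to $\Gamma_f$, where the cycle $Z_f$ encoding the $i_j$'s is directly accessible.
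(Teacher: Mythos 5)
Your overall strategy --- reduce to Proposition \ref{PT} via the auxiliary proper map $\tilde f=(f,L_1|_A,\dots,L_{k-n}|_A)$ and then relate $\mathrm{d}(\tilde f)$ to $\deg Z_f$ through the graph correspondence and associativity of intersections --- is the paper's own, and the multiplicity bookkeeping you sketch is in the spirit of the reduction formula from \cite{TW2} that the paper invokes. The genuine gap is the claimed identity $\mathrm{d}(\tilde f)=\deg Z_f$. All the subspaces $H_\lambda$ share the same linear part $\Ker L$, where $L=(L_1,\dots,L_{k-n})$ is handed to you by the strict regularity hypothesis; varying $\lambda$ only translates them and cannot make the \emph{direction} generic. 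Strict regularity guarantees that $L$ restricted to $f^{-1}(0)$ is proper, but not that $\Ker L$ avoids the components $V_j$ at infinity, which is exactly what is needed for some translate $H_\lambda$ to meet $V_j$ in $\deg V_j$ points; in general one only has $\#(V_j\cap H_\lambda)\le\deg V_j$, possibly with strict inequality for \emph{every} $\lambda$. A concrete failure: for the cusp $V=\{(t^2,t^3)\mid t\in\C\}\subset\C^2$, of degree $3$, the first-coordinate projection is proper on $V$, yet every fibre $\{x_1=\lambda\}$ with $\lambda\neq 0$ meets $V$ in only $2$ points, because the vertical direction passes through the unique point of $\overline{V}$ at infinity. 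So your genericity claim fails, and all your computation actually delivers is $\mathrm{d}(\tilde f)=\deg(Z_f\cdot\Ker L)\le\deg Z_f$.

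The theorem survives, but you need one more step that your write-up omits. Proposition \ref{PT} gives $g^{\mathrm{d}(\tilde f)}=\sum_{j=1}^n g_jf_j$ with $\mathrm{d}(\tilde f)\le\deg Z_f$; multiplying both sides by $g^{\deg Z_f-\mathrm{d}(\tilde f)}$ and setting $h_j:=g^{\deg Z_f-\mathrm{d}(\tilde f)}\,g_j\in\oa(A)$ produces the stated exponent. This padding trick is precisely how the paper closes the argument, and it is the reason the exponent in the statement is $\deg Z_f$ rather than the (possibly smaller, but $L$-dependent) number $\mathrm{d}(\tilde f)$.
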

\begin{proof}
Let $L=(L_1,\dots, L_{k-n})$ be the affine mapping from the definition of strict regularity. Then $\varphi:=(f,L|_A)\in{\oa}(A,{\C}^k)$ is a proper mapping. We may assume that coordinates in ${\C}^m$ are chosen in such a way that $L$ is linear. Observe that $\varphi^{-1}(u,v)=f^{-1}(u)\cap L^{-1}(v)$, for $(u,v)\in{\C}^n\times{\C}^{k-n}$. Consider $\psi(\zeta,u):=(u,L(\zeta))\in{\C}^n\times{\C}^{k-n}$, for $(\zeta,u)\in{\C}^m\times{\C}^n$. It is easy to see that $\psi|_{\Gamma_f}$ is proper: if $K\subset{\C}^k$ is a compat set, then $\psi^{-1}(K)\cap \Gamma_f=\{(\zeta,f(\zeta))\mid \zeta\in\varphi^{-1}(K)\}$ is clearly compact.

Then $\mathrm{d}(\varphi)=\mathrm{d}(\psi|_{\Gamma_f})$, because $(\psi|_{\Gamma_f})^{-1}(u,v)=\{(\zeta, f(\zeta))\mid \zeta\in f^{-1}(u)\cap L^{-1}(v)\}$. The intersection $\Gamma_f\cap \Ker \psi$ is isolated and proper and, obviously, $\mathrm{d}(\psi|_{\Gamma_f})=\deg(\Gamma_f\cdot\Ker\psi)$. Since $\Ker\psi\subset{\C}^m\times\{0\}^n$, we can apply \cite{TW2} Theorem 2.2:
\begin{align*}
\Gamma_f\cdot_{{\C}^{m+n}}\Ker\psi&=(\Gamma_f\cdot_{{\C}^{m+n}}({\C}^m\times\{0\}^n))\cdot_{{\C}^m}\Ker\psi=\\
&=Z_f\cdot\Ker L.
\end{align*}
Therefore, in view of $(\star)$, $$\mathrm{d}(\varphi)=\deg(Z_f\cdot\Ker L)=\sum_{j=1}^ri(\Gamma_f\cdot({\C}^m\times\{0\}^n); V_j)\deg(V_j\cdot \Ker L),$$
where $\deg(V_j\cdot\Ker L)=\sum_{a\in V_j\cap\Ker L}m_a(\pi^L|_{V_j})$ with the local geometric multiplicities $m_a(\pi^L|_{V_j})$ of the projection $\pi^L$ along $\Ker L$. Note that we have necessarily $\deg(V_j\cdot\Ker L)\leq \deg V_j$ whence $\mathrm{d}(\varphi)\leq \deg Z_f$. Now, Proposition \ref{PT} gives 
$$
g^{\mathrm{d}(\varphi)}=\sum_{j=1}^n g_j f_j,\quad \textrm{on}\> A,
$$
with some $g_j\in{\oa}(A)$. Multiplying both sides by $g^{\deg Z_f-\mathrm{d}(\varphi)}$ and putting $h_j:=g^{\deg Z_f-\mathrm{d}(\varphi)}\cdot g_j$ we get the result sought for.
\end{proof}

\end{document}